\newcommand{\deftext}{\textit}
\newtheorem{thm}{Theorem}
\title{The non-synchronizing primitive groups of degree up to 624}
\author{
Leonard H.~Soicher\\
School of Mathematical Sciences\\
Queen Mary University of London\\
Mile End Road, London E1 4NS, UK\\
\texttt{L.H.Soicher@qmul.ac.uk}
}
\begin{document}

\maketitle

\begin{center}
Dedicated to the memory of Richard Parker
\end{center}
\bigskip

\begin{abstract}
We describe the methods and results of a classification of the
non-synchronizing primitive permutation groups of degree up to $624$.
We make use of theory and computation to determine the primitive
groups of degree up to 624 that are non-separating, which is a
necessary, and very often sufficient, condition for a primitive group
to be non-synchronizing, and determine exactly which of these
non-separating groups are non-synchronizing. A new infinite sequence of
non-synchronizing graphs is discovered and new examples of non-separating,
but synchronizing, groups are found.  
\end{abstract}

\medskip

\textbf{Keywords:}
Permutation group;
Primitive group;
Synchronization;
Separation;
Clique number;
Chromatic number;
Computational graph theory;
Computational group theory

\medskip

\textbf{MSC 2020 Codes:} 20B15 (Primary); 
05-08, 05C15, 05C69, 20-08, 20B25 (Secondary)

\section{Introduction}

Interest in synchronizing groups arose from the theory of finite automata,
but the study of synchronizing and related classes of finite primitive
permutation groups is now a fascinating area of research in its own
right.  For an excellent survey of the area, see \cite{ACS}.
In particular, the study of synchronizing groups has strong connections
with important problems in combinatorics, finite geometry, and computation.

In this paper we describe the methods and results of a classification of
the non-synchronizing primitive permutation groups of degree up to $624=5^4-1$.
Our approach is to determine which of the primitive groups of degree up
to $624$ are non-separating, which is a necessary, and very often
sufficient, condition for a primitive group to be
non-synchronizing. We then determine
exactly which of these non-separating groups are non-synchronizing. This
leads to new examples of non-separating, but synchronizing, groups.

We make extensive use of both theoretical and computational tools,
including the application of high-performance parallel computing
in certain cases.  As well, our work has motivated the
development of an algorithm for proper vertex-colouring exploiting
graph symmetry \cite{Soi2} and the discovery of new infinite
sequences of non-synchronizing graphs. We hope to inspire new
discoveries.

In \cite{nonsep}, we provide our main \textsf{GAP} \cite{GAP}
program, a log-file of a run of this program (taking about 57 hours
on a single processor), and a data file containing detailed human and
\textsf{GAP} readable information on all the non-separating primitive
groups classified, including explicit permutation generators for each.
We make use of the Primitive Groups Library of the \textsf{GAP} system,
which provides each of the primitive groups $G$ of degree at most 4095
(up to relabelling the points of its permutation domain) as well as
an O'Nan-Scott type for $G$. Other than for certain primitive groups
of affine type of prime-squared or prime-cubed degree, we provide in
\cite{nonsep} a non-separating graph for each non-separating group
classified, and if the group is almost simple and non-synchronizing,
we provide a non-synchronizing graph for the group.  The graphs are
given in \textsf{GRAPE} \cite{GRAPE} format.

\section{Non-separating and non-synchronizing graphs and groups}

In the present work, we avoid the theory of finite automata, and define
non-separating and non-synchronizing permutation groups in terms of the
graphs on which they act as groups of automorphisms (see \cite{ACS} for
the justification). It is worth pointing out, however, that a permutation
group $G$ on a finite set $\Omega$ is synchronizing if and only if, 
for every non-permutation $f:\Omega\to\Omega$, the 
transformation semigroup generated by $G$ and $f$ contains a constant
mapping.

In this paper, all graphs are finite and undirected,
with at least one vertex, but with no loops and no multiple edges.
Our notation for group names follows that of the \textsf{GAP} Primitive
Groups Library.

Let $\Gamma$ be a graph.  A \deftext{clique} of $\Gamma$ is a
set of pairwise adjacent vertices, and a \deftext{coclique} (or
\deftext{independent set}) of $\Gamma$ is a set of vertices, no two of
which are adjacent. The \deftext{clique number} $\omega(\Gamma)$ is
the size of a largest clique of $\Gamma$, and the \deftext{independence
number} $\alpha(\Gamma)$ is the size of a largest coclique. The
\deftext{automorphism group} $\mathrm{Aut}(\Gamma)$ of $\Gamma$ 
consists of all the permutations $g$ of its vertex set 
$V(\Gamma)$ such that
$\{a,b\}$ is an edge of $\Gamma$ if and only if $\{a^g,b^g\}$ is
also an edge. If $\Gamma$ is vertex-transitive (i.e{.} has a group
of automorphisms transitive on $V(\Gamma)$) then
\[ \omega(\Gamma)\alpha(\Gamma)\le |V(\Gamma)|\] (see, for example,
\cite[Theorem~5.3]{ACS}).

A \deftext{proper vertex-colouring} of $\Gamma$ is a labelling of
its vertices by elements from a set of \deftext{colours}, such that
adjacent vertices are labelled with different colours. Where $k$ is a
non-negative integer, a \deftext{vertex $k$-colouring} of $\Gamma$ is
a proper vertex-colouring using at most $k$ colours. A \deftext{minimum
vertex-colouring} of $\Gamma$ is a vertex $k$-colouring with $k$ as small
as possible, and the \deftext{chromatic number} $\chi(\Gamma)$ 
is the number of colours used in a minimum vertex-colouring of $\Gamma$.
Clearly, $\omega(\Gamma)\le \chi(\Gamma)$.

We say that a graph $\Gamma$ is \deftext{non-synchronizing} if $\Gamma$
is neither a null graph (a graph with no edges) nor a complete graph, and
$\omega(\Gamma)=\chi(\Gamma)$. A permutation group $G$ on $\Omega$ is said
to be \deftext{non-synchronizing} if there is a non-synchronizing graph
$\Gamma$ with vertex-set $\Omega$ such that $G\le \mathrm{Aut}(\Gamma)$;
otherwise $G$ is \deftext{synchronizing}.  It is easy to see that if a
non-trivial permutation group $G$ on $\Omega$ is synchronizing, then $G$
must be \deftext{primitive}, that is, $G$ is transitive on $\Omega$,
and $G$ preserves no partition of $\Omega$ other than the ``trivial"
ones having exactly $1$ or $|\Omega|$ parts.

We say that a vertex-transitive graph $\Gamma$ is \deftext{non-separating}
if $\Gamma$ is neither a null graph nor a complete graph and
$\omega(\Gamma)\alpha(\Gamma)=|V(\Gamma)|$. A transitive permutation group
$G$ on $\Omega$ is said to be \deftext{non-separating} if there is a
non-separating graph $\Gamma$ with vertex-set $\Omega$ such that $G\le
\mathrm{Aut}(\Gamma)$; otherwise $G$ is \deftext{separating}.

Now suppose that $\Gamma$ is vertex-transitive and non-synchronizing. Then
\[ \chi(\Gamma)=\omega(\Gamma)\le |V(\Gamma)|/\alpha(\Gamma)\le
\chi(\Gamma),\] so we must have $\chi(\Gamma)=|V(\Gamma)|/\alpha(\Gamma)$,
so the set of colour-classes of a minimum vertex-colouring of $\Gamma$
is a partition of $V(\Gamma)$ by $\omega(\Gamma)$ cocliques of size
$\alpha(\Gamma)$. In particular, a vertex-transitive non-synchronizing
graph must be non-separating, so a separating transitive group must
be synchronizing.

Note that if a transitive permutation group $G$ on $\Omega$ 
is 2-set transitive (in
particular if $G$ is 2-transitive) then any graph $\Gamma$ on $\Omega$
with $G\le \mathrm{Aut}(\Gamma)$ must be null or complete, so 
$G$ is separating and synchronizing.

Note also that if $\Gamma$ is a vertex-transitive graph with a 
prime number of vertices,
such that $|V(\Gamma)|=\omega(\Gamma)\alpha(\Gamma)$, then either
$\omega(\Gamma)=1$ or $\alpha(\Gamma)=1$, so $\Gamma$ is null or complete,
so $\Gamma$ is separating. Thus, any transitive group of prime degree
is separating, and hence synchronizing.

We remark that the separating rank~3 graphs were (mostly) determined
in \cite{BGLR2}. In his PhD thesis \cite{Sch}, Schaefer lists the
non-synchronizing 2-closed primitive permutation groups of degree up to 100
(but wrongly includes the non-separating but synchronizing such groups),
as well as listing the non-synchronizing 2-closed primitive rank~3 groups
of degree 101 to 630.

\section{The Hamming graphs} 

Let $d,m$ be positive integers and let $M$ be a set of size $m$. The
\deftext{Hamming graph} $H(d,m)$ has vertex-set $M^d$, with two vertices
joined by an edge precisely when they differ in just one co-ordinate.

It is known that $H(d,m)$ is non-synchronizing if $d,m>1$.
Here is the argument.  Suppose $M=\{0,\ldots,m-1\}$.  We obtain a clique
of size $m$ in $H(d,m)$ by taking the set of vertices having zeros in
the first $d-1$ co-ordinates.  We obtain a vertex $m$-colouring of
$H(d,m)$ by colouring a vertex with the sum of its
co-ordinates mod $m$.  It follows that $\chi(H(d,m))=\omega(H(d,m))=m$.
Furthermore, if $d,m>1$ then $H(d,m)$ is neither null nor complete.

\section{The O'Nan-Scott Theorem}

The famous O'Nan-Scott Theorem can be used to divide up the 
primitive permutation groups in various useful ways. 
For our purposes, we state a version based on \cite[Chapter~4]{Cam}. 

\begin{thm}[O'Nan-Scott]
Let $G$ be a primitive permutation group on a finite set $\Omega$. Then 
(at least) one of the following holds:
\begin{enumerate}
\item 
$|\Omega|=m^d$ for some integers $d>1,m>2$, 
and $G\le \mathrm{Aut}(H(d,m))$; 
\item $G$ is of \deftext{affine type}, i.e{.}
$\Omega$ can be identified with the vector space
$\mathrm{GF}(p)^d$ (for some prime $p$ and positive integer $d$),
and $T\trianglelefteq G\le\mathrm{AGL}(d,p)$, where $T$ is the
\deftext{translation subgroup} isomorphic to $GF(p)^d$ acting by addition;
\item $G$ is of \deftext{almost simple type}, i.e{.}
$S\le G\le \mathrm{Aut}(S)$ for some non-abelian simple 
group $S$;
\item $G$ is of ``diagonal type", so in particular,
the socle of $G$ consists of the direct product of two or 
more isomorphic copies of a non-abelian simple group.
\end{enumerate}
\end{thm}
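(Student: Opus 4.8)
The plan is to reduce the whole statement to an analysis of the socle $N=\mathrm{soc}(G)$, the product of all minimal normal subgroups of $G$. Primitivity constrains $N$ severely: either $N$ is itself a minimal normal subgroup, or it is the product of exactly two minimal normal subgroups, each regular on $\Omega$ and centralising the other. A minimal normal subgroup is characteristically simple, hence isomorphic to a direct power $T^k$ (with $k\ge 1$) of a simple group $T$, so the argument splits according to whether $T$ is abelian and according to $k$. First I would dispose of the abelian case: if $T$ is abelian then $T\cong\mathrm{GF}(p)$ for a prime $p$, and a primitive group with abelian socle has that socle elementary abelian and regular, so identifying $\Omega$ with $N\cong\mathrm{GF}(p)^d$ places $G$ inside $\mathrm{AGL}(d,p)$ with $N$ as its translation subgroup. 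This is exactly case~2.

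For non-abelian $T$ I would next split on $k$ and on the number of minimal normal subgroups. If $N=T$ is a single non-abelian simple group, then $C_G(N)=1$, so conjugation embeds $G$ into $\mathrm{Aut}(T)$ with $T\le G$, giving almost simple type (case~3). If instead there are two minimal normal subgroups, each regular and isomorphic to $T^k$, the point stabiliser meets the socle in a diagonal subgroup and $G$ is of diagonal (holomorph) type, which I would record under case~4.

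The substance of the theorem, and the step I expect to be the main obstacle, is a single non-abelian socle $N\cong T^k$ with $k\ge 2$. Here I would study how $N_\omega=N\cap G_\omega$ sits inside $T^k$ through its projections onto the simple factors $T_1,\dots,T_k$. If, relative to a $G$-invariant partition of the factors, $N_\omega$ is a full subdirect product, the point stabiliser is assembled from diagonal subgroups and $G$ is of diagonal type (case~4). Otherwise the factors break into $d\ge 2$ genuine blocks, producing a $G$-invariant Cartesian decomposition $\Omega\cong\Delta^d$ on which $G$ acts in the product action as a subgroup of $\mathrm{Sym}(\Delta)\wr\mathrm{Sym}(d)$; since this component action involves the non-abelian simple group $T$, its degree $m=|\Delta|$ is at least $5$ and in particular exceeds $2$, and because the full wreath product preserves the Hamming graph $H(d,m)$ so does $G$, placing $G$ in case~1. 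The delicate points --- which are the real content of O'Nan--Scott --- are building the Cartesian decomposition rigorously from the projection data, cleanly separating product-action groups from diagonal ones (including the treatment of twisted-wreath-type groups, whose regular non-abelian socle must be routed into one of these cases), and checking that the coarse four-way division stated here indeed exhausts the finer classification. For a complete and careful argument I would follow the socle-and-stabiliser machinery developed in \cite[Chapter~4]{Cam}.
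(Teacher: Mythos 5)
The paper does not prove this theorem at all: it is stated as a known classification result, ``a version based on \cite[Chapter~4]{Cam}'', with no proof environment following it. So there is no internal argument to compare yours against; like you, the paper ultimately rests on Cameron's Chapter~4. Your outline is the standard socle-based proof of exactly this version, and its case division is sound: an abelian minimal normal subgroup is elementary abelian and regular, giving case~2; a unique non-abelian simple minimal normal subgroup gives $C_G(N)=1$ and hence case~3; two minimal normal subgroups are mutually centralising, regular and isomorphic, so the socle is $T^k\times T^k$ and the socle condition of case~4 holds; and for a unique non-regular socle $T^k$ with $k\ge 2$, the subdirect/non-subdirect dichotomy for $N_\omega$ (via Scott's lemma) separates case~4 from case~1. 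Deferring the construction of the Cartesian decomposition to the reference is legitimate here, since the paper defers everything.

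The one step you flag as unresolved --- where the twisted-wreath groups go --- can be settled directly, and it is worth recording: they land in case~1. Suppose the socle $N\cong T^k$ ($T$ non-abelian, $k\ge 2$) is regular, and identify $\Omega$ with $N$, so that $G=N\rtimes G_\omega$ with $G_\omega$ acting on $N$ by conjugation; this action is faithful because $C_G(N)=1$ ($N$ is the unique minimal normal subgroup and has trivial centre). Since $T$ is non-abelian, the $k$ simple direct factors of $N$ are its only minimal normal subgroups, so conjugation by any element of $G_\omega$ permutes these factors and induces isomorphisms between them; identifying each factor with $T$, every element of $G_\omega$ therefore acts on $T^k$ as a product-action element of $\mathrm{Sym}(T)\wr\mathrm{Sym}(k)$, while $N$ acts by coordinatewise right translations, which lie in $\mathrm{Sym}(T)^k$. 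Hence $G\le \mathrm{Sym}(T)\wr\mathrm{Sym}(k)=\mathrm{Aut}\bigl(H(k,|T|)\bigr)$ with $k\ge 2$ and $|T|\ge 60>2$, which is precisely case~1 (the same argument handles the compound-holomorph groups, though those already satisfy the socle condition of case~4). With this observation your four-way division is exhaustive, and your attempt matches the intended proof as closely as a blind reconstruction could.
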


Thus, a (non-trivial) synchronizing permutation group must be 
primitive of affine, almost simple, or diagonal type. 
Moreover, we have the following very useful result:

\begin{thm}[Bray, Cai, Cameron, Spiga, Zhang \cite{BCCSZ}] 
Suppose $G$ is a primitive permutation group on a finite set $\Omega$
and $G$ is \textit{not} almost simple. Then $G$ is synchronizing
if and only if $G$ is separating. 
\end{thm}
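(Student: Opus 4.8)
The plan is to reduce to the substantive implication and then split by O'Nan--Scott type. One direction is already in hand: the displayed chain of inequalities above shows that for \emph{any} transitive group, separating implies synchronizing (a vertex-transitive non-synchronizing graph is non-separating, so a group admitting a non-synchronizing invariant graph admits a non-separating one). Hence it remains to prove the converse for primitive $G$ that is not almost simple, i.e. that a non-separating such $G$ is non-synchronizing. Concretely, starting from a $G$-invariant non-separating graph $\Gamma$, with $\omega(\Gamma)\alpha(\Gamma)=|\Omega|$, I would aim to exhibit a $G$-invariant graph that is neither null nor complete and whose vertex set partitions into $\omega$ cocliques; since $\omega\le\chi$ always, such a partition forces $\chi=\omega$, and hence non-synchronization.

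Second, I would apply the O'Nan--Scott Theorem just stated: as $G$ is primitive and not almost simple, it falls under case (1) (product/Hamming type), case (2) (affine type), or case (4) (diagonal type). The Hamming case is immediate from Section~3: there $G\le\mathrm{Aut}(H(d,m))$ with $d,m>1$, and $H(d,m)$ was shown to be non-synchronizing, with $\omega=\chi=m$; since also $\alpha(H(d,m))=m^{d-1}$ and $\omega\alpha=m^d=|\Omega|$, such a group is both non-synchronizing and non-separating, so the equivalence holds trivially. Thus the real content lies in the affine and diagonal cases.

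Third, for the affine and diagonal cases I would exploit the group structure of the socle. In the affine case the translation subgroup acts regularly, so every $G$-invariant graph is a Cayley graph $\mathrm{Cay}(V,S)$ on $V=\mathrm{GF}(p)^d$ with a $G_0$-invariant connection set $S$; in the diagonal case the socle $T^k$ provides an analogous translation action. The strategy is to show that tightness of the clique-coclique bound forces algebraic structure on the extremal sets: a standard analysis of the equality case $\omega\alpha=|\Omega|$ in a vertex-transitive graph gives that each maximum clique meets each maximum coclique in exactly one vertex, and I would convert this into a \emph{tiling} of $\Omega$ by translates of a maximum coclique $I$, i.e. a set of $\omega$ pairwise disjoint translates of $I$ covering $\Omega$, which is exactly a proper $\omega$-colouring. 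In the affine case such a tiling can be extracted from the linear structure of $S$. In the diagonal case the existence of the tiling is equivalent to the existence of a complete mapping of $T$, which is precisely the assertion of the Hall--Paige conjecture (now a theorem).

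I expect the diagonal case to be the main obstacle, for two reasons. First, unlike the Hamming case, one cannot simply read off a colouring from an explicit graph; the required partition into cocliques must be produced, and there is no purely combinatorial reason for a maximum coclique to translate into a partition. Second, the construction is not elementary: it rests on the group-theoretic Hall--Paige result, so the crux is to reformulate ``non-separating plus diagonal'' as a statement about complete mappings, and then to verify that the maximum coclique genuinely has the form needed for Hall--Paige to apply. The affine case should be more routine once the tiling principle is set up, and the Hamming case comes for free.
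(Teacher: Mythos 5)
First, a point of order: the paper contains no proof of this statement. It is quoted, with attribution, as an external theorem of Bray, Cai, Cameron, Spiga and Zhang \cite{BCCSZ}, so your proposal can only be measured against the argument in that reference. Your overall skeleton does agree with it: the easy direction is the displayed inequality argument of Section~2, the O'Nan--Scott theorem reduces the converse to the Hamming, affine and diagonal cases, the Hamming case is vacuous because such groups are simultaneously non-synchronizing and non-separating, and the Hall--Paige theorem is indeed the decisive external input for diagonal groups.

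There are, however, two genuine gaps. The first is the step you describe as converting the unique-intersection property into ``a tiling of $\Omega$ by translates of a maximum coclique $I$''. This conversion is the entire content of the theorem and cannot be a standard analysis valid for all vertex-transitive graphs: if it were, every non-separating primitive group would be non-synchronizing, contradicting the nineteen almost simple non-separating synchronizing groups listed in Table~\ref{nonsepsynch} of this very paper. What the equality case actually gives, when a regular subgroup $R$ acts by left translation, is that each $g\in R$ factors uniquely as $g=ic^{-1}$ with $i\in I$, $c\in C$, so that $\Omega=R$ is partitioned by the sets $Ic^{-1}$, $c\in C$; but these are \emph{right} translates of $I$, and right translations need not be automorphisms of $\Gamma$. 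They are cocliques precisely when $R$ is abelian (this, not ``the linear structure of $S$'', is what makes the affine case work), or when both the left and the right regular actions of $R$ preserve $\Gamma$, which is what happens for diagonal groups with exactly two socle factors. The second gap is the diagonal case with $k\ge 3$ socle factors: there one does not convert a hypothesised non-separating graph into a tiling. Instead, Hall--Paige (applicable because a non-abelian finite simple group has non-trivial, non-cyclic Sylow $2$-subgroups) yields a complete mapping of $T$, hence a partition of the Cayley table of $T$ into transversals, hence a proper $\omega$-colouring of one specific invariant Latin-square-type graph; this shows such groups are \emph{unconditionally} non-synchronizing (and non-separating), so the equivalence holds vacuously for them. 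Your claim that in the diagonal case ``the existence of the tiling is equivalent to the existence of a complete mapping of $T$'' is therefore misallocated: for $k=2$ no complete mapping is needed, and for $k\ge 3$ the complete mapping is not extracted from the non-separating hypothesis or from any given maximum coclique.
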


\section{Some non-synchronizing graphs}

We curate a collection of non-synchronizing graphs, such that every
non-synchronizing, non-affine, non-diagonal primitive 
group of degree at most 624 is
a group of automorphisms of a graph isomorphic to one in our collection.

\subsection{Some infinite sequences of non-synchronizing graphs}

First, we describe some known infinite sequences of non-synchronizing
graphs defined combinatorially.

\begin{thm} 
\label{nonsynchcombi}
The following graphs are non-synchronizing:
\begin{enumerate}
\item where $d,m>1$, the Hamming graph $H(d,m)$;
\item where $1<k<n$ and $k$ divides $n$, the complement $\bar{K}(n,k)$
of the Kneser graph $K(n,k)$ (the \deftext{Kneser graph} $K(n,k)$ has 
vertex-set the $k$-subsets of $\{1,\ldots,n\}$, with two vertices 
joined by an edge precisely when they have trivial intersection);
\item where $n\ge 9$ and $n$ is congruent to 1 or 3 (mod~6),
the graph $J(n,3)$ whose vertices are the 3-subsets of $\{1,\ldots,n\}$,
and with vertices $v,w$ joined by an edge exactly when $|v\cap w|=2$;
\item
where $k>1$, $k$ divides $n$, and $n/k>2$, the graph whose vertices are
the partitions of $\{1,\ldots,n\}$ into $k$-subsets, with two vertices
joined by an edge exactly when they have no part in common.
\end{enumerate}
\end{thm}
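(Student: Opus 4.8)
The plan is to handle all four families by a single squeeze. Since $\omega(\Gamma)\le\chi(\Gamma)$ always holds, it suffices for each graph $\Gamma$ in the list to exhibit a clique of some size $N$ together with a proper vertex-colouring using $N$ colours; then $N\le\omega(\Gamma)\le\chi(\Gamma)\le N$, forcing $\omega(\Gamma)=\chi(\Gamma)=N$. In each case the ``neither null nor complete'' requirement is a routine check from the stated hypotheses (for instance, in part~(4) the condition $n/k>2$ is exactly what lets one keep one block fixed and rearrange the remaining $\ge 2$ blocks, producing two distinct partitions sharing a block, so that $\Gamma$ is not complete). Part~(1) needs no new work: it is precisely the clique-and-colouring computation already given in the Hamming graph section, with $N=m$.

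For part~(2) I would take $\Gamma=\bar K(n,k)$, where two $k$-subsets are adjacent exactly when they meet. The ``star'' of all $k$-subsets containing a fixed point is a clique (its members pairwise intersect in that point) of size $\binom{n-1}{k-1}$. For the colouring I would invoke Baranyai's theorem: since $k\mid n$, the set of all $\binom nk$ $k$-subsets of $\{1,\dots,n\}$ partitions into $\binom{n-1}{k-1}$ parallel classes, each being a partition of $\{1,\dots,n\}$ into pairwise disjoint $k$-subsets. Colouring each vertex by its parallel class is proper, because two $k$-subsets in one class are disjoint, hence non-adjacent. Thus $N=\binom{n-1}{k-1}$, and notably no use of the Erd\H{o}s--Ko--Rado bound is required, since the colouring itself certifies that the star is a maximum clique.

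Part~(4) is dual to this, using Baranyai's theorem for the clique instead of the colouring. Here the vertices are the partitions of $\{1,\dots,n\}$ into $k$-subsets, adjacent when they share no block. A Baranyai partition of all $k$-subsets into $\binom{n-1}{k-1}$ parallel classes is itself a set of $\binom{n-1}{k-1}$ vertices that are pairwise block-disjoint (no $k$-subset lies in two classes), hence a clique. For the colouring I would fix a point $p$ and colour each partition by the unique block containing $p$; two partitions of the same colour share that block and so are non-adjacent, and the number of colours is the number of $k$-subsets through $p$, namely $\binom{n-1}{k-1}=N$.

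The genuinely deep input is reserved for part~(3), the graph $J(n,3)$ in which triples are adjacent when they meet in two points. A clique of size $n-2$ is the star of all triples through a fixed pair $\{a,b\}$. For a matching colouring I want to partition all $\binom n3$ triples into $n-2$ cocliques; a coclique here is a family of triples meeting pairwise in at most one point, that is, a partial Steiner triple system, and a perfect such partition is exactly a \emph{large set} of disjoint Steiner triple systems on $n$ points, of which there must be $n-2$. This is where the hypotheses bite: such a large set exists precisely when $n\equiv1,3\pmod 6$ and $n\neq7$, by the theorem of Lu Jiaxi and Teirlinck, and the bound $n\ge9$ is present solely to exclude the exceptional value $n=7$. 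Granting that theorem, colouring each triple by the Steiner system containing it gives $\chi(\Gamma)\le n-2=N$ and completes the squeeze. I expect this existence result for large sets of Steiner triple systems to be the main obstacle; everything else reduces to writing down explicit stars and applying the (classical but nontrivial) theorem of Baranyai.
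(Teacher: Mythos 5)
Your proposal is correct and follows essentially the same route as the paper: the Hamming computation for part~(1), Baranyai's theorem paired with the star clique for part~(2), and Cameron's argument (Baranyai clique plus colouring by the block through a fixed point) for part~(4). For part~(3) the paper simply cites Ara\'ujo--Cameron--Steinberg, and the argument you spell out---a star clique of size $n-2$ together with a large set of disjoint Steiner triple systems, whose existence for $n\equiv 1,3 \pmod 6$, $n\ne 7$ is the Lu--Teirlinck theorem---is precisely the argument in that reference.
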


\begin{proof}
\begin{enumerate}
\item 
We have already handled this case.
\item
As explained in \cite[Section~6.1.1]{ACS}, this is an application of
Baranyai’s Theorem \cite{Bar}, which says that when $k$ divides $n$,
the $k$-subsets of $N:=\{1,\ldots,n\}$ can be partitioned into classes,
each of which is a partition of $N$. These classes form the colour-classes
of a proper vertex-colouring of $\bar{K}(n,r)$, which has a clique
consisting of all the $k$-subsets containing a chosen element (say $1$)
of $N$. This clique has exactly one vertex in each colour-class, so it
follows that $\chi(\bar{K}(n,k))=\omega(\bar{K}(n,k))$. Furthermore,
since $k$ divides $n$ and $1<k<n$ then $\bar{K}(n,k)$ is non-null and
non-complete.
\item 
This is proved in \cite[Section~6.1.2]{ACS}.
\item
This is a result of Peter Cameron, motivated by the present work.
The only proof which has appeared so far is in his blog
``Cameron Counts" \cite{CameronCounts}, so for the reader's convenience,
we reproduce his proof here, lightly edited.

Suppose $k>1$, $k$ divides $n$, and $n/k>2$. Let $\Gamma$ be the graph
whose vertices are the partitions of $N:=\{1,\ldots,n\}$ into $k$-subsets,
with two vertices joined by an edge exactly when they have no part in
common. First note that since $1<k<n$, $\Gamma$ is not the null graph,
and since $n/k>2$, $\Gamma$ is not the complete graph.

We colour the vertices of $\Gamma$ as follows. Choose an element
$x$ of $N$, and for each $(k-1)$-subset $A$ of $N$ not containing $x$
assign colour $c_A$ to a partition $P$ if the part of $P$ containing $x$
is $\{x\}\cup A$. Each colour class is an independent set, so we obtain
a proper vertex-colouring.

We now apply Baranyai’s Theorem \cite{Bar} to find an appropriate
clique. The $k$-subsets of $N$ can be partitioned into classes, each
of which is a partition of $N$. Of the resulting partitions, clearly no
two share a part, and so they form a clique in the graph. This clique
has exactly one vertex in each colour class above.
It follows that $\chi(\Gamma)=\omega(\Gamma)$.
\end{enumerate} 
\end{proof}

The geometry of certain projective or polar spaces can be used to
produce infinite sequences of non-synchronizing graphs.  We use the
notation $\mathrm{PG}(d,q)$ for the \deftext{projective space} of
(projective) dimension $d$ over the finite field $\mathrm{GF}(q)$,
whose \deftext{points}, \deftext{lines}, $\ldots$, are respectively
the one, two, $\ldots$ dimensional subspaces of the $(d+1)$-dimensional
vector space over $\mathrm{GF}(q)$, with incidence given by symmetrized
inclusion.  The \deftext{symplectic polar space} $\mathrm{W}(2r-1,q)$
is the subgeometry of $\mathrm{PG}(2r-1,q)$ consisting of the points,
lines, $\ldots$ on which a given nondegenerate alternating (bilinear)
form is identically $0$, and the \deftext{Hermitian polar space} ${\cal
H}(d,q^2)$ is the subgeometry of $\mathrm{PG}(d,q^2)$ consisting of
the points, lines, $\ldots$ on which a given nondegenerate Hermitian
(sesquilinear) form is identically $0$.  These geometries are well-studied
(see, for example, \cite[Chapter~4]{Bal}), and can conveniently be
constructed using the \textsf{GAP} package \textsf{FinInG} \cite{FinInG}.

The \deftext{point graph} (resp. \deftext{line graph}) of a geometry
$\cal G$ having points and lines is the graph whose vertices are the
points (resp. lines) of ${\cal G}$, with distinct vertices joined by
an edge precisely when they are incident with a common line (resp.
incident with a common point).

\begin{thm}
\label{nonsynchgeom}
The following graphs are non-synchronizing:
\begin{enumerate}
\item where $q$ is a prime-power,
the line graph of $\mathrm{PG}(3,q)$;
\item
where $q$ is a prime-power,
the point graph of ${\cal H}(3,q^2)$; 
\item
where $q$ is a prime-power, the graph whose vertices are
the points of $\mathrm{PG}(2,q^2)$ not in ${\cal H}(2,q^2)$, with
distinct vertices $v,w$ joined by an edge precisely when the line of
$\mathrm{PG}(2,q)$ through $v$ and $w$ meets ${\cal H}(2,q^2)$ in just
one point (i.e{.} is a tangent line);
\item
where $q$ is a power of 2,
the complement of the point graph of $W(3,q)$. 
\end{enumerate}
\end{thm}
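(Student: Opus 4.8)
The plan is to treat all four graphs by the single strategy already used for the Hamming graphs: for each graph $\Gamma$ I would first check that it is neither null nor complete, and then exhibit a clique $C$ of some size $c$ together with a proper vertex-colouring using exactly $c$ colours (equivalently, a partition of $V(\Gamma)$ into $c$ cocliques). Since $\omega(\Gamma)\le\chi(\Gamma)$ always holds, the clique forces $c\le\omega(\Gamma)$ and the colouring forces $\chi(\Gamma)\le c$, so $c\le\omega(\Gamma)\le\chi(\Gamma)\le c$ and hence $\omega(\Gamma)=\chi(\Gamma)=c$. In each case the clique will come from a natural ``line-like'' object of the underlying geometry, and the colouring from a classical partition of that geometry (a parallelism, a spread, or a fan of ovoids).

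For (1), the vertices are the lines of $\mathrm{PG}(3,q)$ and two are adjacent exactly when they meet. A plane of $\mathrm{PG}(3,q)$ contains $q^2+q+1$ pairwise-meeting lines, giving a clique of that size; the classical fact that a family of pairwise-meeting lines is either concurrent or coplanar shows this is largest, so $\omega=q^2+q+1$. For the colouring I would take a parallelism (packing) of $\mathrm{PG}(3,q)$, that is, a partition of its line-set into $q^2+q+1$ spreads; each spread is a set of mutually skew lines, hence a coclique, so this is a proper $(q^2+q+1)$-colouring. The existence of such parallelisms for every prime power $q$ is the ingredient I would invoke here.

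Cases (2) and (4) I would phrase in the language of the associated generalized quadrangles. In (2), $\Gamma$ is the collinearity graph of the Hermitian GQ ${\cal H}(3,q^2)$; a line of the GQ is a clique of size $q^2+1$, and because a generalized quadrangle contains no triangle off a line, this is the clique number. Each colour-class must then be a coclique of size $q^3+1$, that is, an ovoid, so the colouring amounts to a \emph{fan}: a partition of the points of ${\cal H}(3,q^2)$ into $q^2+1$ ovoids. In (4), with $q$ even, $\Gamma$ is the \emph{complement} of the collinearity graph of $W(3,q)$, so two points are adjacent exactly when the symplectic form does not vanish on them; a clique is then a set of pairwise non-perpendicular points, that is, a coclique of the GQ $W(3,q)$, whose maximum size is the ovoid size $q^2+1$ (the hypothesis that $q$ is even is exactly what guarantees such an ovoid exists). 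The cocliques of $\Gamma$ are the totally isotropic lines, so a symplectic spread of $W(3,q)$ — a partition of the points into $q^2+1$ totally isotropic lines — supplies the required $(q^2+1)$-colouring.

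Case (3) follows the same pattern, but with the roles of tangent and secant lines of the unital ${\cal H}(2,q^2)$. The $q^2$ points of a tangent line lying off the unital are pairwise joined by that tangent line, giving a clique of size $q^2$, and I would argue by a short counting argument that this is the clique number; the colour-classes are then cocliques of off-unital points that are pairwise joined by secant lines, which I would build from suitable Baer substructures (or group orbits) of $\mathrm{PG}(2,q^2)$. The hard part throughout is the colouring side rather than the clique side: the clique numbers follow from incidence arguments, whereas the colourings require geometric partitions whose existence must be secured for all admissible $q$. The parallelisms of $\mathrm{PG}(3,q)$ and the symplectic spreads of $W(3,q)$ are classical, but I expect the fan of ovoids of ${\cal H}(3,q^2)$ in (2) and the exact secant-coclique partition in (3) to demand the most care, not least because the natural (plane-section) ovoids of ${\cal H}(3,q^2)$ pairwise intersect — two non-tangent planes meet in a line, and ${\cal H}(3,q^2)$ has no external lines — and so cannot themselves form a fan.
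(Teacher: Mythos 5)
Your overall strategy --- exhibit a clique of size $c$ and a partition of the vertex set into $c$ cocliques, then sandwich $c\le\omega\le\chi\le c$ --- is exactly the paper's, and your treatments of (1), (2) and (4) use the same geometric ingredients the paper invokes: a packing (parallelism) of $\mathrm{PG}(3,q)$, which exists for all prime powers $q$ by Denniston's result, together with a clique of $q^2+q+1$ pairwise-meeting lines (the paper takes the pencil of lines through a point rather than the lines of a plane, but both work); a fan of ovoids of ${\cal H}(3,q^2)$ together with a line of that generalized quadrangle; and, for $q$ even, an ovoid of $W(3,q)$ as the clique together with a symplectic spread as the colouring. Your observation that plane-section ovoids of ${\cal H}(3,q^2)$ pairwise meet, so that a fan cannot be assembled from them, is correct and isolates the real difficulty in (2); but you stop at flagging it. The paper closes this point by citation (Brouwer--Wilbrink; Brouwer--Van Maldeghem, Proposition 2.7.3), and your proof needs the same citation or an explicit construction, which you do not supply.

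The genuine gap is case (3). Your clique --- the $q^2$ off-unital points of a tangent line --- is right, but the colouring is the entire content of the statement: one must partition the $q^2(q^2-q+1)$ points of $\mathrm{PG}(2,q^2)$ off the Hermitian curve into $q^2$ cocliques, each a set of points pairwise joined by secant lines. ``I would build [it] from suitable Baer substructures (or group orbits)'' is not a construction, and there is no classical partition (analogous to a packing, fan, or spread) to fall back on here: this is a result of the paper's author, whose proof appears in Brouwer--Van Maldeghem, Section 3.1.6, where the graph is denoted $NU_3(q)$ and $\chi=\omega=q^2$ is established. As written, your proposal proves (1) and (4), proves (2) modulo a known existence result you have correctly identified but not secured, and leaves (3) unproved.
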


\begin{proof}
\begin{enumerate}
\item
The projective space $\mathrm{PG}(3,q)$ has a \deftext{resolution}
(also called a \deftext{packing} or \deftext{parallelism}), that is,
a partition of the lines into spreads, where a \deftext{spread} is a
set $\cal S$ of lines such that every point is incident with exactly
one line in $\cal S$. See \cite{Den}.

Thus, the line graph of $\mathrm{PG}(3,q)$ (which is neither null nor
complete) has a proper vertex colouring whose colour classes are the
spreads in a resolution, and has a clique consisting of the lines incident
with a given point, which has a vertex in each colour class.
\item
This follows from the fact that ${\cal H}(3,q^2)$ has a \deftext{fan},
that is, a partition of its point-set into ovoids, where an
\deftext{ovoid} is a set $\cal O$ of points such that every line is
incident with exactly one point in $\cal O$. See \cite[Theorem~6.8]{ACS},
and \cite{BW} or \cite[Proposition~2.7.3]{BVM}.
\item
This result of the author was inspired by the present work. Its proof,
that $\chi(\Delta)=\omega(\Delta)=q^2$,
now appears as a small part of \cite[Section~3.1.6]{BVM}, 
where the graph $\Delta$ is denoted by $NU_3(q)$.
\item
This follows from the fact that, for $q$ even, 
$W(3,q)$ has both an ovoid and a spread. See \cite[Section~6.2]{ACS}.
\end{enumerate} 
\end{proof}

\subsection{Some non-synchronizing graphs with a non-trivial group
fixing a set of colour classes}

In certain difficult cases, we were able to find 
a $G$-invariant graph 
$\Gamma$ that has a vertex $\omega(\Gamma)$-colouring whose set
of colour-classes is invariant under a non-trivial
subgroup $H$ of $G$. Here, the key \textsf{GRAPE} \cite{GRAPE} function used
was \texttt{GRAPE\_ExactSetCover}, which, given a permutation group $G$
on $\{1,\ldots,n\}$, certain subsets of $\{1,\ldots,n\}$ and $H\le G$,
either finds an $H$-invariant exact set cover
of $\{1,\ldots,n\}$ consisting of elements in the $G$-orbits of the
given subsets, or shows that no such set cover exists. 
Here, the given subsets would be $G$-orbit representatives
of the cocliques of $\Gamma$ of maximum size. 

\begin{table}
\begin{center}
\begin{tabular}{|r|r|l|r|r|l|}
\hline
$n$ & lib nr & $G=\mathrm{Aut}(\Gamma)$ & $\mathrm{deg}(\Gamma)$ & $\omega(\Gamma)$ & $H$ \\
\hline
248 & 1 & PSL(2, 31) & 150 & 31 & $C_{31}$ \\
280 & 22 & J\_2.2 & 135 & 28 & $C_7$ \\
315 & 1 & PSp(6, 2) & 152 & 45 & $C_3\times C_3$ \\
462 & 2 & Sym(12) & 425 & 66 & $C_{11}$ \\
495 & 4 & M(12).2 & 472 & 165 & $C_{11}$ \\
495 & 6 & M(12).2 & 398 & 55 & $C_{11}$ \\
560 & 2 & PSz(8).3 & 507 & 112 & $C_7$ \\
574 & 3 & PSL(2, 41) & 532 & 82 & $C_{41}$ \\
620 & 1 & PSL(2, 31) & 595 & 155 & $C_{31}$ \\
\hline
\end{tabular}
\end{center}
\caption{Some non-synchronizing graphs $\Gamma$ with a minimum vertex-colouring
whose set of colour-classes is $H$-invariant}
\label{Hinvariant}
\end{table}

We give in Table~\ref{Hinvariant} a summary of the non-synchronizing
graphs found this way. The ``lib~nr" column gives the \textsf{GAP}
\cite{GAP} Primitive Groups Library number for 
a primitive group $G=\mathrm{Aut}(\Gamma)$ of degree $n$ 
of a graph $\Gamma$ with vertex-degree
$\mathrm{deg}(\Gamma)$, clique number $\omega(\Gamma)$, and having a
vertex $\omega(\Gamma)$-colouring such that the set of colour
classes is $H$-invariant. The group $H$ is a cyclic Sylow subgroup of
$G$, except for $H\cong C_3\times C_3$, which is the derived subgroup
of a Sylow 3-subgroup.

In certain other difficult cases, the key was only to search for
a vertex $\omega(\Gamma)$-colouring whose colour-classes are fixed class-wise
by some subgroup $K$ of $G$, found by working through the cyclic groups
generated by $G$-conjugacy class representatives of a given order.

We give in Table~\ref{Kinvariant} a summary of the non-synchronizing
graphs found this way. We remark that the graph on 175 vertices is
the edge-graph of the Hoffman-Singleton graph. That this edge-graph
has chromatic number equal to its clique number also follows from the
fact that the Hoffman-Singleton graph has chromatic index~7, which
had been determined by Gordon Royle (and others \cite{CGH}) previously.
The graph on 525 vertices is the line graph of the ``Cohen-Tits near
octagon". That this graph has chromatic number equal to its clique
number also follows from the fact that the Cohen-Tits near octagon has a
resolution (see \cite{Soi1}), a computation motivated by the present work.

\begin{table}
\begin{center}
\begin{tabular}{|r|r|l|r|r|l|}
\hline
$n$ & lib nr & $G=\mathrm{Aut}(\Gamma)$ & $\mathrm{deg}(\Gamma)$ & $\omega(\Gamma)$ & $K$ \\
\hline
175 & 4 & PSigmaU(3, 5) & 12 & 7 & $C_5$ \\
525 & 6 & J\_2.2 & 12 & 5 & $C_5$ \\
567 & 5 & PSU(4, 3).2\^{ }2 & 120 & 21 & $C_6$ \\
\hline
\end{tabular}
\end{center}
\caption{Some non-synchroning graphs $\Gamma$ with a minimum vertex-colouring
whose colour-classes are class-wise $K$-invariant}
\label{Kinvariant}
\end{table}

\section{The groups of prime, prime-squared or prime-cubed degree}

Every primitive group of prime degree is separating, and hence
synchronizing. 

The primitive groups of prime-squared or prime-cubed degree are largely
handled by applying the following result. 

\begin{thm}
\label{TH:smalldim}
Suppose $p$ is a prime, $G$ is a primitive group of affine type
on $V:=GF(p)^d$, with $d>1$, and $H$ is the stabilizer in $G$ of the zero 
vector in $V$. Then:
\begin{enumerate}
\item
if $H$ is transitive on the 1-dimensional subspaces 
of $V$ then $G$ is separating;
\item
if there is a $(d-1)$-dimensional subspace $W$ of $V$ 
and an $H$-orbit ${\cal O}$ of 
1-dimensional subspaces of $V$ such that $W$ contains 
no element of ${\cal O}$ then $G$ is non-synchronizing.
\end{enumerate}
\end{thm}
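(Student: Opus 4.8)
The plan is to treat the two parts separately, exploiting in both the fact that any $G$-invariant graph on $V$ is a Cayley graph $\mathrm{Cay}(V,S)$ whose connection set $S$ (the neighbourhood of the zero vector) is a symmetric, $H$-invariant subset of $V\setminus\{0\}$, and conversely that every such $S$ yields a $G$-invariant graph. So I would prove (2) by exhibiting one explicit non-synchronizing Cayley graph, and (1) by showing that no such Cayley graph can be non-separating. Here $H$ is linear and fixes $0$, so $H$-invariance of $S$ is exactly the condition that $G$ (generated by the translations and $H$) acts by automorphisms.

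For (2), I would take $S:=\bigcup_{L\in\mathcal{O}}(L\setminus\{0\})$, the set of all nonzero vectors lying on the $1$-subspaces in the orbit $\mathcal{O}$. This $S$ is symmetric (each $L$ is a subspace) and $H$-invariant (as $\mathcal{O}$ is an $H$-orbit and $H$ acts linearly), so $\Gamma:=\mathrm{Cay}(V,S)$ is $G$-invariant. Any single $L\in\mathcal{O}$, viewed as its set of $p$ vectors, is a clique, since all nonzero differences lie in $L\subseteq S$; hence $\omega(\Gamma)\ge p$. Colouring each vertex $v$ by its coset $v+W\in V/W$ uses exactly $p$ colours and is proper: if $u-v\in S$ then $u-v\in L\setminus\{0\}$ for some $L\in\mathcal{O}$, and because $W$ contains no member of $\mathcal{O}$ we have $L\cap W=\{0\}$, so $u-v\notin W$ and $u+W\ne v+W$. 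Thus $\chi(\Gamma)\le p$, and with $\omega(\Gamma)\le\chi(\Gamma)$ I conclude $\omega(\Gamma)=\chi(\Gamma)=p$. Since $\mathcal{O}\ne\emptyset$ the graph is non-null, and since $d>1$ the nonzero vectors of $W$ are non-neighbours of $0$, so it is non-complete; hence $\Gamma$ is non-synchronizing and $G$ is non-synchronizing.

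For (1), I would argue by contradiction: suppose some non-null, non-complete $\Gamma=\mathrm{Cay}(V,S)$ is non-separating. As $G$ is transitive the clique--coclique bound applies, and equality $\omega(\Gamma)\alpha(\Gamma)=|V|$ yields a maximum clique $C$ and maximum coclique $I$ with $|C\cap I^{g}|=1$ for every $g\in G$ (a standard averaging argument). Translating, I may assume $0\in C\cap I$. Letting $g$ run over the translations converts $|C\cap I^{g}|=1$ into an exact factorization $V=C-I$, in which every vector is uniquely a difference $c-i$; letting $g$ run over $H$ gives $C\cap h(I)=\{0\}$ for all $h\in H$, which on averaging over $H$ forces every nonzero element of $C$ to lie in an $H$-orbit disjoint from $I$. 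Meanwhile, $H$-transitivity on the $1$-subspaces, together with $S$ being $H$-invariant, non-empty and proper, forces $S$ (and hence its complement) to meet every $1$-subspace in the same number $k$ of points with $1\le k\le p-2$; consequently neither $C$ nor $I$ can contain a full affine line (a coset of a $1$-subspace), since such a line would force $k=p-1$ or $k=0$.

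The hard part will be closing the contradiction in (1): I must show that the exact factorization $V=C-I$, in which the nonzero parts of the two factors occupy disjoint unions of $H$-orbits each meeting every $1$-subspace a constant number of times, cannot have both factors non-trivial. This is a genuinely factorization-theoretic obstruction rather than a counting one, since the sizes alone turn out to be consistent, so the transitivity hypothesis must enter in an essential way. When one factor has prime order $p$ (in particular when $d=2$), Rédei's theorem on factorizations of elementary abelian $p$-groups identifies that factor as a subgroup, i.e. a $1$-subspace, contradicting the ``no affine line'' conclusion. For general $d$ I would pass to the dual group and use Fourier analysis on $V$: an exact factorization is equivalent to the zero-sets of the Fourier transforms of the indicator functions of $C$ and $I$ together covering all non-trivial characters, and I would exploit the induced $H$-action on the character group, and the resulting constant line-intersection numbers, to show this covering is impossible unless $C$ or $I$ is trivial. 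Making this final step both correct and uniform across all $d$ is the crux of the argument.
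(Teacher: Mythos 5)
Your treatment of part (2) is correct and is essentially the paper's own argument in different clothing: the paper builds the graph on $V$ whose edges are the pairs of points lying on a common translate of a line in the $G$-orbit of some $\ell\in\mathcal{O}$, which is exactly your Cayley graph $\mathrm{Cay}(V,S)$ with $S=\bigcup_{L\in\mathcal{O}}(L\setminus\{0\})$, and it uses the same proper $p$-colouring by the cosets of $W$ and the same clique $\ell$ of size $p$.

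Part (1) is where the genuine gap lies. The paper does not prove this statement; it cites it as known (Theorem~7.8 of the Ara\'{u}jo--Cameron--Steinberg survey), so you are attempting to reprove a nontrivial theorem from scratch, and the attempt is incomplete. Your reductions are sound as far as they go: clique--coclique equality in a vertex-transitive graph does give $|C\cap I^{g}|=1$ for all $g\in G$; running $g$ over translations gives the exact factorization $V=C-I$; running $g$ over $H$ gives the orbit-disjointness; and $H$-transitivity on $1$-subspaces forces the constant line-intersection number $k$ with $1\le k\le p-2$, so neither $C$ nor $I$ contains an affine line. But the final step --- showing that such a factorization with both factors nontrivial cannot exist --- is precisely the content of the theorem, and you concede you do not have it. R\'edei's theorem closes only the case $d=2$ (it requires \emph{all} factors to have prime order, so it does not even cover the case of one factor of order $p$ when $d>2$), and the Fourier-analytic plan for general $d$ is only a plan: the observation that the zero sets of the transforms of the indicator functions of $C$ and $-I$ must cover all nontrivial characters is correct, but you give no argument that the $H$-symmetry makes this covering impossible. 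As it stands, part (1) is unproven; to match the paper you should either cite the known result, as the paper does, or actually carry out the missing step rather than flag it as the crux.
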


\begin{proof}
\begin{enumerate}
\item
See \cite[Theorem~7.8]{ACS}.
\item
This is an observation of Peter Cameron. 

Suppose $W$ is a $(d-1)$-dimensional subspace of $V$ and there is an
$H$-orbit ${\cal O}$ of 1-dimensional subspaces of $V$ such that $W$
contains no element of ${\cal O}$. Let $\ell\in{\cal O}$ and $g\in G$.
Then $g=ht$ for some $h\in H$ and some $t$ in the translation
subgroup of $G$. Now $\ell^h$ is not contained in $W$, so
$\ell^h$ is a transversal for the cosets of $W$ in $V$, but then so is its
translate by $t$. Thus, every element in the $G$-orbit of $\ell$
is a transveral for the cosets of $W$ in $V$.  Now construct the graph
$\Gamma$ whose vertices are the elements of $V$, with distinct vertices
$a,b$ joined by an edge if and only if $a$ and $b$ are contained in some
element of the $G$-orbit of $\ell$. Then $G\le \mathrm{Aut}(\Gamma)$,
the cosets of $W$ in $V$ are the colour classes of a vertex $p$-colouring
of $\Gamma$, and $\ell$ is a clique of size $p$ of $\Gamma$.
The result follows.
\end{enumerate} 
\end{proof}

\section{The main computation}

We have written a \textsf{GAP} \cite{GAP} program, making use of the
\textsf{GRAPE} \cite{GRAPE}, \textsf{FinInG} \cite{FinInG}, \textsf{AGT}
\cite{AGT}, and \textsf{DESIGN} \cite{DESIGN} packages to determine the
non-separating primitive groups having degrees in a given set, and of
these non-separating groups, the non-synchronizing ones.  This program
is available publicly, together with a log-file of a run of the program
applied to the degrees in $\{2,\ldots,624\}$ \cite{nonsep}. The program
uses the main approach detailed below.  In addition, for some groups
$G$, the program contains a hint (such as the vertex-degree) of a known
non-synchronizing graph for $G$, or invokes a known result, to avoid
much fruitless computation.  Moreover, in three specific cases, the
program relies on external high-performance parallel computation. This is
documented in the program, and these parallel computations are described
in the next section.

We maintain a list $\cal L$ of non-synchronizing graphs.  This list is
initialized with those graphs described in Theorems \ref{nonsynchcombi}
and \ref{nonsynchgeom} that have degree at most 624 and a vertex-primitive
automorphism group, and the graphs given in Tables \ref{Hinvariant}
and \ref{Kinvariant}. We consider the primitive groups of a given degree in 
order of non-increasing size.

Suppose now $G$ is a primitive group on $\Omega:=\{1,\ldots,n\}$, obtained
from the \textsf{GAP} Library of Primitive Groups, and
we want to determine whether $G$ is non-separating, and if so,
whether $G$ is non-synchronizing.
The general approach is as follows.

If $G$ is of prime degree or $G$ is 2-transitive
then $G$ is separating. 

If $G$ is of affine type of degree $p^2$ or $p^3$, for some prime $p$,
then we apply the tests of Theorem~\ref{TH:smalldim}. 

If $G$ is not taken care of by the previous steps, we compute the non-null
non-complete graphs $\Gamma$ with vertex-set $\Omega$, such that $G\le
\mathrm{Aut}(\Gamma)$.  These are the non-null non-complete ``generalized
orbital graphs" for $G$, and are very efficiently constructed and stored
making use of the \textsf{GRAPE} function \texttt{GeneralizedOrbitalGraphs} 
and the \textsf{GRAPE} package graph data structure \cite{GRAPE}. There
are, however, $2^m-2$ such graphs, where $m$ is the number of
$G$-orbits on the 2-subsets of $\Omega$.
If $m=1$ then $G$ is 2-set transitive, so $G$ is separating and we are 
done with $G$.

Otherwise, it is then checked (using \textsf{nauty} \cite{nautytraces} 
via \textsf{GRAPE}) whether any non-null non-complete generalized 
orbital graph for $G$ is
isomorphic to a graph on the current list ${\cal L}$ of non-synchronizing
graphs, and if so, then $G$ is non-synchronizing and we are done with $G$.

Otherwise, we consider the distinct pairs $\{\Gamma,\bar{\Gamma}\}$
of complementary non-null non-complete generalized orbital graphs for $G$, 
such that the
vertex-degree of $\Gamma$ is less than or equal to that of $\bar{\Gamma}$.
The powerful clique machinery in \textsf{GRAPE}, which exploits graph
symmetry, can usually be used to determine $\omega:=\omega(\Gamma)$, and if
this number divides $n$, then whether there is a clique in $\bar{\Gamma}$
of size $n/\omega$.  We make use of the \textsf{AGT} package to check
first whether the Hoffman coclique bound (also known as the ratio bound)
for $\Gamma$ is at least $n/\omega$.  (For some difficult cases a clique
of $\bar{\Gamma}$ of size $n/\omega$ is found by assuming an appropriate
$H\le G$ stabilizing the clique.)  If $\bar{\Gamma}$ has a clique of
size $n/\omega$ (equivalently, $\Gamma$ has a coclique of this size),
then $G$ is non-separating.  In this case, if $G$ is not almost simple
then $G$ is non-synchronizing and we are done with $G$.

On the other hand, if every distinct pair $\{\Gamma,\bar{\Gamma}\}$
of non-null non-complete generalized orbital graphs for $G$ has
$\omega(\Gamma)\omega(\bar{\Gamma})<n$, then we know that $G$ is separating,
and hence synchronizing.

We are left with the case where our primitive $G$ is almost simple and
non-separating.  We must examine all the non-separating graphs for $G$
to determine whether any of them is non-synchronizing.  For that, the
proper vertex-colouring machinery in \textsf{GRAPE}, which exploits the
automorphism group of a graph, can usually be applied successfully.
If a non-synchronizing graph is found for $G$, then of course $G$ is
non-synchronizing, and we add this graph to our list ${\cal L}$. If
no non-separating graph for $G$ is non-synchronizing, then $G$ is
synchronizing.

The main computation found, and added to the list ${\cal L}$, the 
non-synchronizing graphs given in Table~\ref{nonsynchfurther}. 
This completed the proof of the following:

\begin{thm}
Let $G$ be a non-synchronizing primitive almost simple group of degree at
most 624. Then $G$ is a group of automorphisms of a graph isomorphic
to one of those described in Theorem~\ref{nonsynchcombi},
Theorem~\ref{nonsynchgeom}, Table~\ref{Hinvariant},
Table~\ref{Kinvariant}, or Table~\ref{nonsynchfurther}.  
\end{thm}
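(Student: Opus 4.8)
The plan is to prove the theorem by an exhaustive, structured computation over all almost simple primitive groups of degree at most $624$, exploiting the symmetry-reduction already established. The first step is to justify the reduction to the almost simple case: by the O'Nan--Scott Theorem every primitive group is of Hamming, affine, almost simple, or diagonal type, and the statement concerns only the almost simple groups. Since a vertex-transitive non-synchronizing graph must be non-separating, a non-synchronizing almost simple group is in particular non-separating; it therefore suffices to locate all \emph{non-separating} almost simple primitive groups of degree at most $624$ and, among these, to single out the non-synchronizing ones. The enumeration itself is supplied by the \textsf{GAP} Primitive Groups Library, which lists every primitive group of degree at most $4095$ together with its O'Nan--Scott type.

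Next I would run a cascade of tests on each almost simple group $G$ of degree $n\le 624$, in order of increasing expense. Groups of prime degree and $2$-set transitive groups (in particular $2$-transitive groups) are separating, hence synchronizing, and are discarded at once. For the remainder I would construct all non-null, non-complete generalized orbital graphs for $G$ --- the $G$-invariant graphs on $\Omega$, determined by unions of $G$-orbits on $2$-subsets --- using the efficient \textsf{GRAPE} machinery. For each complementary pair $\{\Gamma,\bar\Gamma\}$ with $\mathrm{deg}(\Gamma)\le\mathrm{deg}(\bar\Gamma)$ I would compute $\omega:=\omega(\Gamma)$ with the symmetry-exploiting clique algorithms, test whether $\omega\mid n$, and, after a quick filter by the Hoffman ratio bound via \textsf{AGT}, test whether $\bar\Gamma$ contains a clique of size $n/\omega$ --- equivalently whether $\Gamma$ has a coclique of that size. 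If no such pair attains $\omega(\Gamma)\alpha(\Gamma)=n$, then $G$ is separating and hence synchronizing, so $G$ does not appear in the classification.

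If instead $G$ turns out to be non-separating, the decisive step is to examine \emph{all} of its non-separating graphs and decide, for each, whether $\chi(\Gamma)=\omega(\Gamma)$. Before any raw colouring search I would first test each candidate graph for isomorphism (via \textsf{nauty} through \textsf{GRAPE}) against a curated list ${\cal L}$ of known non-synchronizing graphs, since a match settles the case immediately. Otherwise I would deploy the proper vertex-colouring machinery of \textsf{GRAPE}, which exploits $\mathrm{Aut}(\Gamma)$ to search for a minimum vertex-colouring: a successful $\omega(\Gamma)$-colouring certifies that $G$ is non-synchronizing, while exhausting all non-separating graphs without finding one certifies that $G$ is non-separating but synchronizing. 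Every newly discovered non-synchronizing graph is added to ${\cal L}$, and collecting these yields Table~\ref{nonsynchfurther}; together with the infinite families of Theorems~\ref{nonsynchcombi} and~\ref{nonsynchgeom} and the symmetry-reduced examples of Tables~\ref{Hinvariant} and~\ref{Kinvariant}, this exhausts the possibilities.

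I expect the main obstacle to be the computational hardness of the clique, coclique, and chromatic-number problems, all NP-hard in general. The critical bottleneck is establishing $\chi(\Gamma)=\omega(\Gamma)$ for the hardest non-separating almost simple groups: this requires either explicitly constructing a minimum vertex-colouring or ruling one out, and even with the substantial automorphism group exploited, a handful of cases will be intractable on a single processor. For these I would fall back on high-performance parallel computation, and in the most recalcitrant instances seed the search by assuming a non-trivial subgroup $H\le G$ (or $K\le G$) that stabilizes, respectively, the set of colour-classes or each colour-class, as recorded in Tables~\ref{Hinvariant} and~\ref{Kinvariant}; this drastically prunes the search space and renders the otherwise-hopeless colourings findable.
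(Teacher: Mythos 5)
Your proposal follows essentially the same route as the paper's own proof: an exhaustive computation over the library of primitive groups using generalized orbital graphs, symmetry-exploiting clique/coclique computations filtered by the Hoffman bound to settle separation, isomorphism testing against a growing curated list of non-synchronizing graphs, \textsf{GRAPE} vertex-colouring searches (seeded by $H$- or $K$-invariance in hard cases) to settle synchronization, and parallel computation for the few intractable instances. The only differences are inessential orderings (the paper tests isomorphism against the list $\cal L$ before, rather than after, the separation test) and your restriction of the computation to almost simple groups, which is legitimate since that is all the theorem requires.
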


\begin{table}
\begin{center}
\begin{tabular}{|r|r|l|r|r|l|}
\hline
$n$ & lib nr & $G=\mathrm{Aut}(\Gamma)$ & $\mathrm{deg}(\Gamma)$ & $\omega(\Gamma)$ \\
\hline
21 & 1 & PGL(2, 7) & 4 & 3 \\
52 & 1 & PSL(3, 3).2 & 6 & 4 \\
55 & 3 & PGL(2, 11) & 8 & 5 \\
56 & 7 & Sym(8) & 25 & 8 \\
91 & 4 & PGL(2, 13) & 48 & 13 \\
102 & 1 & PSL(2, 17) & 95 & 34 \\
105 & 6 & PSL(3, 4).D\_12 & 8 & 5 \\
117 & 1 & PSL(3, 3).2 & 16 & 9 \\
120 & 11 & Sym(8) & 14 & 8 \\
126 & 13 & Sym(9) & 60 & 9 \\
135 & 3 & PSO+(8, 2) & 64 & 9 \\
144 & 4 & M(12).2 & 66 & 12 \\
186 & 1 & PSL(3, 5).2 & 10 & 6 \\
234 & 2 & PSL(3, 3).2 & 96 & 13 \\
285 & 1 & PGL(2, 19) & 156 & 19 \\
336 & 7 & PSp(6, 2) & 225 & 28 \\
336 & 6 & PSL(3, 4).D\_12 & 30 & 16 \\
364 & 4 & PSO(7, 3) & 243 & 28 \\
396 & 2 & M(12).2 & 200 & 11 \\
425 & 1 & PSp(4, 4).4 & 256 & 17 \\
456 & 2 & PSL(3, 7).Sym(3) & 441 & 57 \\
465 & 3 & PSL(5, 2).2 & 28 & 15 \\
496 & 8 & PSL(5, 2).2 & 30 & 16 \\
520 & 7 & PSL(4, 3).2\^{ }2 & 24 & 13 \\
\hline
\end{tabular}
\end{center}
\caption{Further non-synchronizing graphs}
\label{nonsynchfurther}
\end{table}

\section{Parallel computing}

In just three cases, we made use of a hybrid
\textsf{GAP}/\textsf{GRAPE}/C program of the author for computing the
cliques with given vertex-weight sum in a graph whose vertices are
weighted with non-zero $d$-vectors of non-negative integers.
\textsf{GAP} and \textsf{GRAPE} are used to exploit graph symmetry and
to build the top of the search tree. Then, a C program
is used on the Apocrita HPC cluster \cite{Apocrita} to handle the 
lower branches of the search tree in parallel. 
This set-up has been very useful in the classification of combinatorial
designs (see \cite{BS}) and the calculation of clique numbers.

The most difficult case handled for the current project concerned
the generalized orbital graph $\Gamma$ with vertex-degree 250, clique
number 63, and coclique number 5 for the primitive action of J\_2.2 on
315 points.  The problem was to determine whether the vertices of $\Gamma$
could be partitioned into 63 of its 1008 cocliques of size 5.
For this, a graph $\Upsilon$ was constructed on these 1008 cocliques,
with two cocliques joined by an edge exactly when they have trivial
intersection. Then the vertices of $\Upsilon$ were weighted with their
characteristic vectors (of length 315), and the question was whether
$\Upsilon$ had a clique whose vertex-weights summed to the all-1 vector.
The answer (no) took about 33 hours runtime using 100 cores, and showed
that the graph $\Gamma$ has chromatic number greater than its clique
number. This was used in the proof that the degree 315
primitive permutation representations of J\_2 and J\_2.2 are synchronizing
(but non-separating).

A similar parallel computation, using about 140 minutes runtime on 100
cores on the Apocrita cluster was used to show that the generalized
orbital graph with vertex-degree 144, clique number 25, and coclique
number 13 for the primitive action of PSp(4,~5).2 on 325 points does not
have a vertex 25-colouring. This was used in the proof that the 
degree 325 primitive permutation representations of PSp(4,~5)
and PSp(4,~5).2 are synchronizing (but non-separating).

A further separate computation was performed to handle the 262143
complementary pairs of non-null non-complete 
generalized orbital graphs for the primitive degree
465 representation of PSL(2,~31), to show that this group and its overgroup
PGL(2,~31) on 465 points are both separating.  This was mostly done
using \textsf{GAP}, \textsf{GRAPE}, and \textsf{AGT}, but for two pairs
$\{\Gamma,\bar{\Gamma}\}$ with $\Gamma$ having vertex-degree 24 and clique
number 5, a parallel computation using the \textsf{GAP}/\textsf{GRAPE}/C
program of the author on the Apocrita cluster 
was used to show that $\omega(\bar{\Gamma})<93$.

\section{The synchronizing groups}

It turns out that, amongst the 4538 primitive permutation groups having
degree in $\{2,\ldots,624\}$, precisely 3298 are synchronizing. Of these,
just 348 are neither 2-transitive nor have prime degree, and of these,
just 19 are non-separating. We list these non-separating synchronizing
groups in Table~\ref{nonsepsynch}.

The groups of degrees 40, 156, and 400 in Table~\ref{nonsepsynch}
belong to the only known infinite sequences of non-separating
synchronizing groups. They arise via their actions as orthogonal
groups on the ``parabolic quadric'' $Q(4,p)$ with $p$ an odd prime (see
\cite[Example~6.9]{ACS}).  The groups of degree 63 and 210 are also known
examples of non-separating synchronizing groups. Those of degree 63 come
from the primitive actions of degree 63 of PSU(3,~3).2 and its socle
PSU(3,~3), such that this socle has permutation rank~5. Those of degree 210
are discussed in \cite{ABC}. The remaining examples appear to be new.

\begin{table}
\begin{center}
\begin{tabular}{|r|r|l|}
\hline
$n$ & lib nr & $G$ \\
\hline
40 & 4 & PSp(4, 3):2 \\
40 & 3 & PSp(4, 3) \\
63 & 4 & PSU(3, 3).2 \\
63 & 3 & PSU(3, 3) \\
156 & 4 & PSp(4, 5).2 \\
156 & 3 & PSp(4, 5) \\
210 & 2 & Sym(10) \\
210 & 1 & Alt(10) \\
280 & 12 & PSL(3, 4).D\_12 \\
280 & 9 & PSL(3, 4).Sym(3) \\
280 & 10 & PSL(3, 4).Sym(3) \\
280 & 11 & PSL(3, 4).6 \\
280 & 7 & PSL(3, 4).3 \\
315 & 3 & J\_2.2 \\
315 & 2 & J\_2 \\
325 & 2 & PSp(4, 5).2 \\
325 & 1 & PSp(4, 5) \\
400 & 4 & PSp(4, 7):2 \\
400 & 3 & PSp(4, 7) \\
\hline
\end{tabular}
\end{center}
\caption{The non-separating synchronizing groups $G$ of degree up to 624}
\label{nonsepsynch}
\end{table}

Although there are synchronizing groups of diagonal type
(see \cite{BGLR1}), there are no such groups of degree up to 624.

\section*{Acknowledgements} 

This research utilised Queen Mary's Apocrita HPC facility \cite{Apocrita},
supported by QMUL Research-IT. I thank Peter Cameron for
interesting and useful discussions about synchronizing and related groups.
I am grateful to Richard Parker for many discussions and insights
on mathematics and computing over many years.


\begin{thebibliography}{99}

\bibitem{ABC} M. Aljohani, J. Bamberg, and P.~J. Cameron,
Synchronization and separation in the Johnson schemes,
\textit{Portugaliae Mathematica} \textbf{74} (2017), 213--232.

\bibitem{ACS} J. Ara\'{u}jo, P.~J. Cameron, and
B. Steinberg, Between primitive and 2-transitive: 
Synchronization and its friends, \textit{EMS Surveys in Mathematical
Sciences} \textbf{4} (2017), 101--184. Freely available at:
\url{https://doi.org/10.4171/emss/4-2-1}

\bibitem{BS} R.~A. Bailey and L.~H. Soicher, Uniform semi-Latin squares and
their pairwise-variance aberrations, \textit{Journal of Statistical Planning
and Inference} \textbf{213} (2021), 282--291.

\bibitem{Bal} S. Ball,
\textit{Finite Geomertry and Combinatorial Applications}, 
Cambridge University Press, Cambridge, 2015.

\bibitem{FinInG} J. Bamberg, A. Betten, P. Cara, 
J. De Beule, M. Lavrauw, and M. Neunhoeffer,
The FinInG (Finite Incidence Geometry) package for GAP,
Version 1.5.6, 2023, \url{https://gap-packages.github.io/FinInG}

\bibitem{BGLR1} J. Bamberg, M. Giudici, J. Lansdown, and G.~F. Royle,
Synchronising primitive groups of diagonal type exist,
\textit{Bulletin of the London Mathematical Society}
\textbf{54} (2022), 1131--1144.

\bibitem{BGLR2} J. Bamberg, M. Giudici, J. Lansdown, and G.~F. Royle,
Separating rank~3 graphs, \textit{European Journal of Combinatorics}
\textbf{112} (2023), article 103732.

\bibitem{Bar} Z. Baranyai, On the factorization of the complete uniform
hypergraph, In: \textit{Infinite and Finite Sets, Vol. 1, Proceedings
of a Colloquium held at Keszthely, 1973. Dedicated to Paul Erd\H{o}s on
his 60th Birthday} (A. Hajnal et al., eds), North-Holland, Amsterdam,
1975, pp. 91--108.

\bibitem{BCCSZ} J.~N. Bray, Q. Cai, P.~J. Cameron, P. Spiga, and H. Zhang,
The Hall-Paige conjecture, and synchronization for affine and diagonal
groups, \textit{Journal of Algebra} \textbf{545} (2020), 27--42.

\bibitem{BVM} A.~E. Brouwer and H. Van Maldeghem,
\textit{Strongly Regular Graphs}, Cambridge University Press,
Cambridge, 2022.

\bibitem{BW} A.~E. Brouwer and H.~A. Wilbrink, Ovoids and fans in the
generalized quadrangle $Q(4,2)$, \textit{Geometriae Dedicata} \textbf{36}
(1990), 121--124.

\bibitem{Cam} P.~J. Cameron, \textit{Permutation Groups}, 
Cambridge University Press, Cambridge, 1999.

\bibitem{CameronCounts} P.~J. Cameron,
A family of non-synchronizing groups, blog post, 2019,
\url{https://cameroncounts.wordpress.com/2019/01/06/a-family-of-non-synchronizing-groups/}

\bibitem{CGH} S.~M. Cioaba, K. Guo, and W.~H. Haemers,
The chromatic index of strongly regular graphs, version~2,
arXiv, 2020. \url{https://doi.org/10.48550/arXiv.1810.06660}

\bibitem{Den} R.~H.~F. Denniston, Some packings of projective spaces,
\textit{Atti della Accademia Nazionale dei Lincei. Classe di Scienze Fisiche,
Matematiche e Naturali. Rendiconti, Serie 8} \textbf{52} (1972), 36--40.

\bibitem{AGT} R.~J. Evans, The AGT (Algebraic Graph Theory) package for
GAP, Version 0.3.1, 2022, \url{https://gap-packages.github.io/agt}

\bibitem{GAP} The GAP Group, GAP -- Groups, Algorithms, and
Programming, Version 4.14.0, 2024, \url{https://www.gap-system.org}

\bibitem{Apocrita} T. King, S. Butcher, and L. Zalewski,
Apocrita -- High performance computing cluster for Queen Mary University
of London, 2017, \url{https://doi.org/10.5281/zenodo.438045}

\bibitem{nautytraces} B.D. McKay and A. Piperno, Practical
graph isomorphism, II, \textit{Journal of Symbolic Computation}
\textbf{60} (2014), 94--112.  nauty and Traces homepage:
\url{https://pallini.di.uniroma1.it}

\bibitem{Sch} A. Schaefer, 
Synchronizing permutation groups and graph endomorphisms,
PhD Thesis, University of St Andrews, 2016.

\bibitem{GRAPE} L.~H. Soicher,
The GRAPE package for GAP, Version 4.9.2, 2024,
\url{https://gap-packages.github.io/grape/}

\bibitem{DESIGN} L.H. Soicher,
The DESIGN package for GAP, Version 1.8.2, 2024,
\url{https://gap-packages.github.io/design}

\bibitem{Soi1} L.~H. Soicher, Using GAP packages for
research in graph theory, design theory, and finite geometry, In:
\textit{Algebraic Combinatorics and the Monster Group}
(A.~A. Ivanov, ed.),
London Mathematical Society Lecture Note Series \textbf{487},
Cambridge University Press, Cambridge, 2024,
pp.~527--566.

\bibitem{Soi2} L.~H. Soicher, Software for proper vertex-colouring
exploiting graph symmetry, In:
\textit{Mathematical Software -- ICMS 2024} (K. Buzzard et al., eds), 
Lecture Notes in Computer Science \textbf{14749}, 
Springer, Cham, 2024, pp.~106--112.

\bibitem{nonsep} L.~H. Soicher, The nonsep repository, release 0.2, 2024,
\url{https://github.com/lhsoicher/nonsep/releases/tag/v0.2}

\end{thebibliography}
\end{document}